\title{A new series\\of large sets of subspace designs\\over the binary field}
\author{Michael Kiermaier, Reinhard Laue, Alfred Wassermann
\\
Mathematisches Institut, University of Bayreuth \\
D-95440 Bayreuth, Germany\\
\small{\texttt{\{michael.kiermaier, reinhard.laue, alfred.wassermann\}@uni-bayreuth.de}}
}
\newtheorem{theorem}{Theorem}
\newtheorem{lemma}{Lemma}[section]
\newtheorem{proposition}[lemma]{Proposition}
\theoremstyle{definition}
\theoremstyle{remark}
\newtheorem{remark}[lemma]{Remark}
\newcommand{\B}{\mathcal{B}}
\newcommand{\Des}{\mathcal{D}}
\newcommand{\qbinom}[3]{\genfrac{[}{]}{0pt}{}{#1}{#2}_{#3}}
\newcommand{\LS}{\operatorname{LS}}
\newcommand{\GF}{\operatorname{GF}}
\newcommand{\PG}{\operatorname{PG}}
\newcommand{\GL}{\operatorname{GL}}
\newcommand{\Aut}{\operatorname{Aut}}
\begin{document}
\maketitle
\begin{abstract}
In this article, we show the existence of large sets $\LS_2[3](2,k,v)$ for infinitely many values of $k$ and $v$.
The exact condition is $v \geq 8$ and $0 \leq k \leq v$ such that for the remainders $\bar{v}$ and $\bar{k}$ of $v$ and $k$ modulo $6$ we have $2 \leq \bar{v} < \bar{k} \leq 5$.

The proof is constructive and consists of two parts.
First, we give a computer construction for an $\LS_2[3](2,4,8)$, which is a partition of the set of all $4$-dimensional subspaces of an $8$-dimensional vector space over the binary field into three disjoint $2$-$(8, 4, 217)_2$ subspace designs.
Together with the already known $\LS_2[3](2,3,8)$, the application of a recursion method based on a decomposition of the Graßmannian into joins yields a construction for the claimed large sets.
\end{abstract}

\section{Introduction}
Let $V$ be a vector space of dimension $v$ over a finite field $\GF(q)$.
For simplicity, a subspace of $V$ of dimension $k$ will be called an \emph{$k$-subspace}.
A \emph{(simple) $t$-$(v,k,\lambda)_q$ subspace design} $\Des = (V,\B)$
consists of a set $\B$ of $k$-subspaces of $V$, called blocks, such that each $t$-subspace of $V$ lies in exactly $\lambda$ blocks.
This notion is a vector space analog of combinatorial $t$-designs on finite sets.
For that reason, subspace designs are also called \emph{$q$-analogs of designs}.
Further names found in the literature include \emph{designs over finite fields}, \emph{designs in vector spaces} and \emph{designs in the $q$-Johnson scheme}.

While combinatorial $t$-designs and Steiner systems have been studied since the 1830s and have a rich literature \cite{Colbourn:2006}, the notion of subspace designs has been introduced by Cameron \cite{Cameron1, Cameron2} and Delsarte \cite{Del76} in the 1970s.

The set of all $k$-subspaces of $V$ is always a design, called trivial design.
In 1987, Thomas \cite{Tho87} constructed the first non-trivial subspace design for $t=2$.
Since then, more subspace designs have been constructed, see
\cite{BKL05,BKKL14,BKOW14,Ito98,KL15,MMY95,Suz90,Suz92}. 

A partition of the trivial design into $N$ disjoint $t$-$(v, k, \lambda)_q$ designs is called \emph{large set} and denoted by $\LS_q[N](t, k, v)$.
The value $\lambda$ is omitted in the parameter notation of a large set as $\lambda = \qbinom{v-t}{k-t}{q}$ is already determined by the other parameters.

For ordinary combinatorial $t$-designs, large sets with $t=1$ exist if and only if $k$ divides $v$ \cite{Bar75}.
In the $q$-analog case, this question is wide open, as it includes the question for the existence of parallelisms in projective geometries:
A large set of $1$-$(v,k,1)_q$ designs (which are called \emph{spreads}) is known as a \emph{$(k-1)$-parallelism} of the projective geometry $\PG(v-1, q)$.
To our knowledge, the only known existence results are the following:
If $v \geq 2$ is a power of $2$, then all $1$-parallelisms do exist \cite{Den72,Beu74}.
Furthermore, for $q=2$ and $v$ even, all $1$-parallelisms do exist \cite{Bak76,Wet91}.
In \cite{EV12}, a $1$-parallelism of $\PG(5,3)$ is given.
The only known parallelism with $k > 1$ is a $2$-parallelism of $\PG(5,2)$ \cite{Sar02}.

For $t \geq 2$, only the following large sets of subspace designs are known:
There are computational constructions of an $\LS_3[2](2,3,6)$ \cite{Bra05}, an $\LS_2[3](2,3,8)$ \cite{BKOW14} and an $\LS_5[2](2,3,6)$ \cite{BKKL14}.
In \cite{BKKL14}, a recursive construction method was developed, which can be seen as a $q$-analog of the theory in \cite{Aj96}, surveyed in \cite{KT09}.
The application of this method to the $\LS_3[2](2,3,6)$ and $\LS_5[2](2,3,6)$ gave an infinite series of large sets $\LS_q(2,k,v)$ with $q\in\{3,5\}$, $v \geq 6$, $v \equiv 2\pmod{4}$, and $3\leq k\leq v-3$, $k\equiv 3\pmod{4}$.

In this paper, we give a computational construction of an $\LS_2[3](2,4,8)$.
The application of the recursion machinery of \cite{BKKL14} to this large set and the already known $\LS_2[3](2,3,8)$ yields a new infinite series of large sets of subspace designs:

\begin{theorem}
	\label{thm:series}
	Let $v$ and $k$ be integers with $v \geq 8$ and $0 \leq k \leq v$ such that
    \renewcommand{\theenumi}{(\roman{enumi})}%
	\begin{enumerate}
		\item $v\equiv 2\bmod 6$ and $k \equiv 3,4,5\bmod 6$ or
		\item $v\equiv 3\bmod 6$ and $k \equiv 4,5\bmod 6$ or
		\item $v\equiv 4\bmod 6$ and $k \equiv 5\bmod 6$.
	\end{enumerate}
	Then there exists an $\LS_2[3](2,k,v)$.
\end{theorem}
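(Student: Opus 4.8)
The plan is to prove the theorem constructively, by feeding a small number of explicit large sets over $\GF(2)$ into the recursive join machinery of \cite{BKKL14} and organizing the bookkeeping around the arithmetic necessary conditions. Recall that for an $\LS_2[3](2,k,v)$ to exist it is necessary that $3\mid\qbinom{v-i}{k-i}{2}$ for $i\in\{0,1,2\}$. First I would check that the three congruence cases in the statement imply these divisibilities and, more importantly, are stable under the recursion steps: since the multiplicative order of $2$ modulo $3$ is $2$, a $q$-analogue of Lucas' theorem expresses $\qbinom{v-i}{k-i}{2}\bmod 3$ as a product of ordinary binomial coefficients modulo $3$, and the contribution of the lowest base-$3$ digit already vanishes once $2\le\bar v<\bar k\le 5$. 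Two consequences matter for the induction: the conditions are invariant under $(v,k)\mapsto(v+6,k)$ and under complementation $(v,k)\mapsto(v,v-k)$, while the residues $\bar v\in\{0,1,5\}$ carry no admissible $k$ at all.

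Next I would assemble the base large sets at the smallest admissible dimensions. At $v=8$ the known $\LS_2[3](2,3,8)$ and the computer-constructed $\LS_2[3](2,4,8)$ are given. Orthogonal complementation $B\mapsto B^{\perp}$ with respect to a fixed nondegenerate form sends a $t$-$(v,k,\lambda)_2$ design to a $t$-$(v,v-k,\lambda')_2$ design and hence carries a large set to a large set; applied to the $k=3$ case it produces an $\LS_2[3](2,5,8)$. Thus all admissible parameters at $v=8$, namely $k\in\{3,4,5\}$, are realized. The same complementation will, at every later dimension, convert the admissible values $k>v/2$ into the admissible values $k<v/2$, so it suffices to reach the lower half recursively.

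The core is the recursion of \cite{BKKL14}, which decomposes the Graßmannian of $V=V_1\oplus V_2$ into joins and builds an $\LS_2[3](2,k,v)$ from large sets on the summands together with the spread- and parallelism-type resolutions that exist over the binary field. From it I would isolate two operations. The step $(v,k)\mapsto(v+6,k)$ preserves $\bar v$ and therefore acts inside each of the three admissible classes $\bar v\in\{2,3,4\}$ separately; alternating it with complementation additionally realizes $(v,k)\mapsto(v+6,k+6)$, so within a class the reachable values of $k$ fill out the whole admissible range. A single-dimension step $(v,k)\mapsto(v+1,k+1)$, used only near the base, bridges the classes: it sends $(8,3)\mapsto(9,4)\mapsto(10,5)$ and $(8,4)\mapsto(9,5)$, yielding $\LS_2[3](2,4,9)$, $\LS_2[3](2,5,9)$ and $\LS_2[3](2,5,10)$, hence all admissible parameters at $v\in\{9,10\}$. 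Starting from these six base large sets at $v\in\{8,9,10\}$ and closing under $(+6,0)$ and complementation—both of which preserve admissibility—a short induction on $v$ with base $v\le 10$ shows that the reachable set is exactly the family in the theorem.

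The hard part will be twofold. Computationally, the whole series rests on the seed $\LS_2[3](2,4,8)$, whose construction in the first part of the paper requires cutting down an astronomical search space, presumably by prescribing a suitable group of automorphisms. Structurally, the delicate point is to confirm that the hypotheses of the join recursion of \cite{BKKL14} are genuinely met for $q=2$ and $N=3$ at each step: one must exhibit the auxiliary resolutions on the complementary summand—the spreads and parallelisms that are available specifically over the binary field and in exactly the dimensions needed to realize the steps $(+6,0)$ and $(+1,+1)$—and check that the resulting blocks partition the entire Graßmannian into three equal $2$-designs. Once this compatibility is secured, the congruence bookkeeping of the first paragraph closes the induction.
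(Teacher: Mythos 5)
Your overall skeleton matches the paper's: induction on $v$, seeds $\LS_2[3](2,3,8)$, $\LS_2[3](2,4,8)$ and the dual $\LS_2[3](2,5,8)$, a bridge to $v\in\{9,10\}$, duality to reduce to $k\leq v/2$, and then the join recursion of \cite{BKKL14}. But the induction step, which is the actual content of the proof, is left as an unverified black box and is partly mischaracterized. The recursion is \emph{not} a unary operation $(v,k)\mapsto(v+6,k)$ that one can simply ``close under''. What the avoiding-join decomposition (with shift $s$) gives is: $\LS_2[3](2,k,v)$ exists provided that \emph{for every} $i\in\{0,\ldots,k\}$ there are $t_1,t_2\geq -1$ with $t_1+t_2+1\geq 2$ such that both $\LS_2[3](t_1,i,s+i)$ and $\LS_2[3](t_2,k-i,v-s-i-1)$ exist. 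The heart of the proof is the choice $s=5$ and the residue-by-residue check that this can always be arranged: for $i\equiv 3,4,5\bmod 6$ the first factor is a genuine $t_1=2$ large set from the induction hypothesis and the second factor is handled by the free convention $t_2=-1$; for $i\equiv 1,2\bmod 6$ one takes derived large sets of $\LS_2[3](2,i,5+i)$ to get $t_1\in\{0,1\}$ and compensates with $t_2\in\{1,0\}$ obtained as derived large sets of $\LS_2[3](2,k-i,v-6-i)$; for $i\equiv 0\bmod 6$ one uses $t_1=-1$, $t_2=2$. Your proposal contains none of this bookkeeping, and instead asserts that one must ``exhibit spread- and parallelism-type resolutions that exist over the binary field'' — no spreads or parallelisms enter the construction at all (indeed the paper remarks that the recursion is independent of $q$ and $N$ once the two seeds exist), so this is the wrong compatibility condition to be checking. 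Without the $i$-by-$i$ verification the induction step is not established.

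A second, smaller gap: the ``single-dimension step $(v,k)\mapsto(v+1,k+1)$'' you use to reach $v=9,10$ is not a unary operation. The actual tool (\cite[Cor.~20]{KL15}) requires \emph{both} $\LS_2[3](2,k-1,v-1)$ and $\LS_2[3](2,k,v-1)$ to produce $\LS_2[3](2,k,v)$; e.g.\ $\LS_2[3](2,4,9)$ needs both the $k=3$ and the $k=4$ large set at $v=8$, and $\LS_2[3](2,5,10)$ needs both $\LS_2[3](2,4,9)$ and $\LS_2[3](2,5,9)$. Your three target parameter sets do all happen to be reachable this way, but as written the mechanism you invoke does not exist, and in particular it would wrongly suggest that $\LS_2[3](2,k+1,v+1)$ follows from $\LS_2[3](2,k,v)$ alone.
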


Denoting the remainder of an integer $n$ modulo $6$ by $\bar{n}\in\{0,\ldots,5\}$, the conditions of Theorem~\ref{thm:series}~(i),~(ii)~and~(iii) can be stated as $2 \leq \bar{v} < \bar{k} \leq 5$.

\section{Preliminaries}
\label{sect:prel}
The set of all $k$-subspaces of $V$ is called the \emph{Gra{\ss}mannian} and is denoted by $\qbinom{V}{k}{q}$.
Our focus lies on the case $q = 2$, where the $1$-subspaces $\langle\mathbf{x}\rangle_{\GF(2)}\in \qbinom{V}{1}{2}$ are in one-to-one correspondence with the nonzero vectors $\mathbf{x} \in V\setminus\{\mathbf{0}\}$.
The number of all $k$-subspaces of $V$ is given by the Gaussian binomial coefficient
$$
	\#\qbinom{V}{k}{q} = \qbinom{v}{k}{q} = \begin{cases}
	\frac{(q^v-1)\cdots(q^{v-k+1}-1)}{(q^k-1)\cdots(q-1)} & \text{if }k\in\{0,\ldots,v\}\text{;}\\0 & \text{else.}\end{cases}
$$
The set $\mathcal{L}(V)$ of all subspaces of $V$ forms the subspace lattice of $V$.

By the fundamental theorem of projective geometry, for $v\neq 2$ the automorphism group of $\mathcal{L}(V)$ is given by the natural action of $\operatorname{P\Gamma L}(V)$ on $\mathcal{L}(V)$.
The \emph{automorphism group} $\Aut(D)$ of a subspace design $D = (V,\mathcal{B})$ is defined as the stabilizer of $\mathcal{B}$ under this group action.
Furthermore, for any subgroup $G \leq \operatorname{P\Gamma L}(V)$ we say that $D$ is \emph{$G$-invariant} if $D^G = D$ or equivalently, $G \leq \Aut(D)$.
In the case that $q$ is prime, the group $\operatorname{P\Gamma L}(V)$ reduces to $\operatorname{PGL}(V)$, and for the case of our interest $q = 2$, it reduces further to $\GL(V)$.
After a choice of a basis of $V$, its elements are represented by the invertible $v\times v$ matrices $A$, and the action on $\mathcal{L}(V)$ is given by the vector-matrix-multiplication $\mathbf{v} \mapsto \mathbf{v} A$.

As $t$-design, the trivial design $(V, \qbinom{V}{k}{q})$ has parameters $t$-$(v,k,\lambda_{\max})_q$, where 
$$
    \lambda_{\max}=\qbinom{v-t}{k-t}{q}.
$$
Hence, an obvious necessary condition for the existence of an $\LS_q[N](t,k,v)$ is the equality $\lambda\cdot N = \lambda_{\max}$. 
Moreover, since the blocks of a $t$-design also form an $i$-design for $i\in\{0,\ldots,t\}$, 
we have the necessary conditions
$$
    N \mid \qbinom{v-i}{k-i}{q}\quad\mbox{for }  i\in\{0,\ldots,t\}\text{.}
$$
A parameter set $\LS_q[N](t,k,v)$ is called \emph{admissible} if all the necessary conditions are fulfilled.
If moreover an $\LS_q[N](t,k,v)$ actually exists, the parameter set is called \emph{realizable}.
In the following, it proves useful to extend the parameters to the value $t=-1$ by unconditionally accepting all the large set parameters of the form $\LS_q[N](-1,k,v)$ as admissible and realizable.

By~\cite[Cor.~19]{KL15}, the existence of some $\LS_q[N](t,k,v)$ with $t\geq 1$ implies the existence of \emph{derived large sets} $\LS_q[N](t-1,k-1,v-1)$, \emph{residual large sets} $\LS_q[N](t-1,k,v-1)$ and the \emph{dual large set} $\LS_q[N](t,v-k,v)$.
Furthermore, the existence of $\LS_q[N](t,k-1,v-1)$ and $\LS_q[N](t,k,v-1)$ implies the existence of an $\LS_q[N](t,k,v)$ \cite[Cor.~20]{KL15}.

The following theory is needed for the recursive construction of large sets.
For more details and proofs, see \cite{BKKL14}.

Two subsets $\mathcal{B}_1$ and $\mathcal{B}_2$ of $\qbinom{V}{k}{q}$ are called \emph{$t$-equivalent} if
\[
	\#\{B\in\mathcal{B}_1 \mid T \leq B\} = \#\{B\in\mathcal{B}_2 \mid T \leq B\}
\]
for all $t$-subspaces $T$ of $V$.
In this situation, the pair $(\mathcal{B}_1,\mathcal{B}_2)$ has also been called \emph{trade} or \emph{bitrade}, see for example \cite{KMP16,K15} for some recent results.
Furthermore, given integers $0 \leq t \leq k \leq v$, $N \geq 2$ and a set $\mathcal{B}$ of $k$-subspaces of $V$, a partition $\{\mathcal{B}_1,\ldots,\mathcal{B}_N\}$ of $\mathcal{B}$ is called an \emph{$(N,t)$-partition}, if the parts $\mathcal{B}_i$ are pairwise $t$-equivalent.
The notion of $(N,t)$-partitions can be seen as a generalization of large sets, as by \cite[Lemma~4.8]{BKKL14}, $(V,\{\mathcal{B}_1,\ldots,\mathcal{B}_N\})$ is an $\LS_q[N](t,k,v)$ if and only if $\{\mathcal{B}_1,\ldots,\mathcal{B}_N\}$ is an $(N,t)$-partition of $\qbinom{V}{k}{q}$.

A set $\mathcal{B}$ of $k$-subspaces is called \emph{$(N,t)$-partitionable} if there exists an $(N,t)$-partition of $\mathcal{B}$.
Again, the notion is extended to $t=-1$ by unconditionally calling any set of $k$-subspaces of $V$ \emph{$(N,-1)$-partitionable}.
We have the following structure properties:
If $\mathcal{B}$ is $(N,t)$-partitionable, it is $(N,s)$-partitionable for all $s\in\{-1,\ldots,t\}$ \cite[Lemma~4.3]{BKKL14}.
The disjoint union of $(N,t)$-partitionable sets is again $(N,t)$-partitionable \cite[Lemma~4.7]{BKKL14}.
Therefore, we can construct an $\LS_q[N](t,k,v)$ by decomposing $\qbinom{V}{k}{q}$ into $(N,t)$-partitionable sets.

The known constructions of large sets of classical combinatorial designs and subspace designs often rely on a decomposition into so-called joins.
In \cite{BKKL14}, three kinds of joins are provided.
For our purpose, we only need one of them:
For a chain of subspaces $K_1 \leq U_1 \leq U_2 \leq K_2 \leq V$, the \emph{avoiding join} of $K_1$ and $K_2$ with respect to the factor space $F = U_2/U_1$ is given by
\[
	K_1 *_{\bar{F}} K_2/U_2 = \{K \in \mathcal{L}(V)\mid U_1 \cap K = K_1, U_2 + K = K_2, U_1 \cap K = U_2 \cap K\}\text{.}
\]
By \cite[Lemma~3.7]{BKKL14}, $K_1 *_{\bar{F}} K_2/U_2$ consists of $q^{(\dim(U_1) - \dim(K_1))(\dim(K_2) - \dim(U_1))}$ subspaces of $V$ of dimension $k_1 + k_2 - u_1$.
The definition is extended to sets $\mathcal{B}^{(1)} \subseteq \qbinom{U_1}{k_1}{q}$ and $\mathcal{B}^{(2)} \subseteq \qbinom{V/U_2}{\bar{k}_2}{q}$ of subspaces by setting
\[
	\mathcal{B}^{(1)} \ast \mathcal{B}^{(2)}
        = \bigcup_{\substack{B^{(1)}\in\mathcal{B}^{(1)} \\ B^{(2)}\in\mathcal{B}^{(2)}}} B^{(1)} \ast B^{(2)}\text{.}
\]
The power of the join for the construction of $(N,t)$-partitionable sets is rooted in the following lemma.
In~\cite[Lemma~4.10]{BKKL14} it is stated and proven for all three kinds of joins.

\begin{lemma}[Basic Lemma for the avoiding join]
\label{lem:basic}
Let $U_1 \leq U_2 \leq V$ be a chain of subspaces, $k_1 \in\{0,\ldots,\dim(U_1)\}$, $\bar{k}_2\in\{0,\ldots,\dim(V/U_2)\}$ and $N$ a positive integer.
If $\mathcal{B}^{(1)} \subseteq \qbinom{U_1}{k_1}{q}$ is $(N,t_1)$-partitionable and $\mathcal{B}^{(2)}\subseteq \qbinom{V/U_2}{\bar{k}_2}{q}$ is $(N,t_2)$-partitionable with integers $t_1, t_2\geq -1$, then the avoiding join $\mathcal{B}^{(1)}\ast_{\overline{U_2/U_1}}\mathcal{B}^{(2)}$ is $(N,t_1+t_2+1)$-partitionable.
\end{lemma}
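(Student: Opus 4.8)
The plan is to build the required partition of the join by a cyclic convolution of the two given partitions, and then to reduce the desired $t$-equivalence (with $t=t_1+t_2+1$) to a single ``local'' counting statement about one individual join. First I would fix an $(N,t_1)$-partition $\{\mathcal{B}^{(1)}_0,\ldots,\mathcal{B}^{(1)}_{N-1}\}$ of $\mathcal{B}^{(1)}$ and an $(N,t_2)$-partition $\{\mathcal{B}^{(2)}_0,\ldots,\mathcal{B}^{(2)}_{N-1}\}$ of $\mathcal{B}^{(2)}$, with all indices read modulo $N$, and define
\[
    \mathcal{C}_i \;=\; \bigcup_{j\in\mathbb{Z}/N} \mathcal{B}^{(1)}_j \ast_{\overline{U_2/U_1}} \mathcal{B}^{(2)}_{i-j}
    \qquad (i\in\mathbb{Z}/N)\text{.}
\]
Since distinct pairs $(B^{(1)},B^{(2)})$ yield disjoint joins (a block $K$ recovers $B^{(1)}=K\cap U_1$ and $B^{(2)}=(K+U_2)/U_2$), and since each index pair $(j,i-j)$ occurs for exactly one $i$, the sets $\mathcal{C}_0,\ldots,\mathcal{C}_{N-1}$ form a partition of $\mathcal{B}^{(1)}\ast_{\overline{U_2/U_1}}\mathcal{B}^{(2)}$ into $N$ parts. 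It then remains to show that these parts are pairwise $t$-equivalent.

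Next I would analyse how a fixed $t$-subspace $T\leq V$ meets the configuration. Put $T_1=T\cap U_1$ and $T_2=(T+U_2)/U_2$, and set $a=\dim T_1$, $b=\dim T_2$. For any block $K$ of the join the avoiding condition gives $K\cap U_2=K\cap U_1\leq U_1$, so if $T\leq K$ then $T\cap U_2=T\cap U_1=T_1$; in particular $T$ lies in no block unless $T\cap U_2=T_1$, and in that case $a+b=\dim T=t$. The heart of the argument is the claim that, whenever $T\cap U_2=T_1$, the local count factorises: there is a constant $c_{a,b}$, depending only on the dimensions $a,b$ and the fixed parameters, with
\[
    \#\{K\in B^{(1)}\ast_{\overline{U_2/U_1}} B^{(2)} : T\leq K\}
    \;=\; c_{a,b}\cdot[\,T_1\leq B^{(1)}\,]\cdot[\,T_2\leq B^{(2)}\,]\text{,}
\]
the point being that the value is independent of the particular blocks $B^{(1)},B^{(2)}$ once the two containments hold. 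I expect this to be the main obstacle: it amounts to parametrising the blocks of a single avoiding join as an affine family of complements and checking that imposing $T\leq K$ cuts out a subfamily whose size depends only on $a$ and $b$; equivalently, one argues via the transitivity of the stabiliser of $(U_1,U_2,B^{(1)},B^{(2)})$ on the admissible subspaces $T$ of a given dimension type.

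Granting the factorisation, I would sum over the blocks and exploit the cyclic structure. Writing $f_j=\#\{B^{(1)}\in\mathcal{B}^{(1)}_j : T_1\leq B^{(1)}\}$ and $g_j=\#\{B^{(2)}\in\mathcal{B}^{(2)}_j : T_2\leq B^{(2)}\}$, the factorisation yields $\#\{K\in\mathcal{C}_i : T\leq K\}=c_{a,b}\sum_{j\in\mathbb{Z}/N} f_j\,g_{i-j}$, a cyclic convolution. Now the identity $a+b=t_1+t_2+1$ forbids $a\geq t_1+1$ and $b\geq t_2+1$ from holding simultaneously, so $a\leq t_1$ or $b\leq t_2$. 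In the first case, the parts of the $(N,t_1)$-partition are in particular pairwise $a$-equivalent (cf.\ \cite[Lemma~4.3]{BKKL14}), so all $f_j$ coincide, say $f_j=f$, whence $\sum_j f_j\,g_{i-j}=f\sum_j g_j$ is independent of $i$; the second case is symmetric with the $g_j$ constant. Hence the counts agree for all $i$, which gives the pairwise $t$-equivalence of the $\mathcal{C}_i$ and therefore the $(N,t_1+t_2+1)$-partitionability of the join.

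Finally, the degenerate situations are immediate and I would dispose of them first: if $T\cap U_2\neq T_1$, then no block of the whole join contains $T$, so every count is $0$; and the boundary case $t_1+t_2+1=-1$ is covered by the convention that every set of subspaces is $(N,-1)$-partitionable. The cases $t_1=-1$ or $t_2=-1$ need no separate treatment, since the pigeonhole step above always places us in a branch where the relevant equivalence degree is nonnegative.
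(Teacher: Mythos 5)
Your proposal is correct, and it is essentially the standard proof: the paper itself gives no argument for this lemma but refers to \cite[Lemma~4.10]{BKKL14}, where the same strategy is used --- distribute the individual joins $\mathcal{B}^{(1)}_j \ast \mathcal{B}^{(2)}_{i-j}$ cyclically over $N$ classes, observe that a $t$-subspace $T$ contained in a block of the avoiding join satisfies $T\cap U_2=T\cap U_1$ and hence $\dim(T\cap U_1)+\dim((T+U_2)/U_2)=t_1+t_2+1$, and conclude by the pigeonhole dichotomy $a\leq t_1$ or $b\leq t_2$ that the convolution $\sum_j f_j g_{i-j}$ is independent of $i$. Your local factorization step (the count of blocks of a single join through $T$ equals $q^{(\dim U_2-k_1)(\bar{k}_2-b)}$ when both containments hold, via complements of $U_2/B^{(1)}$ in $K_2/B^{(1)}$) is the correct way to fill in the one nontrivial verification, so there is no gap.
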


By \cite[Theorem~3.19]{BKKL14}, for a maximal chain
\[
        \{\mathbf 0\} = U_0 < U_1<\ldots < U_v = V
\]
of subspaces and $s\in\{0,\ldots,v-k-1\}$, a partition of $\qbinom{V}{k}{q}$ into avoiding joins is given by the disjoint union
\[
        \qbinom{V}{k}{q}
	= \bigcup_{i=0}^k \qbinom{U_{s+i}}{i}{q} *_{\overline{U_{s+i+1}/U_{s+i}}} \qbinom{V/U_{s+i+1}}{k-i}{q}\text{.}
\]
For the construction of large sets, only the dimensions of the involved Graßmannians are relevant.
Reducing the notation to this information, we say that the above decomposition has the \emph{decomposition type}
\[
	\qbinom{v}{k}{}
	= \bigcup_{i=0}^k 
	\qbinom{s+i}{i}{} * \qbinom{v-s-i-1}{k-i}{}\text{.}
\]

By the above discussion, the application of the Basic Lemma~\ref{lem:basic} to this decomposition yields
\begin{proposition}
	\label{prop:avoiding_construction}
	Let $0 \leq t\leq k \leq v$, $N \geq 2$ and $s\in\{0,\ldots,v-k-1\}$ be integers.
	If for each $i\in\{0,\ldots,k\}$ there are integers $t_1,t_2\geq -1$ with $t_1 + t_2 + 1 \geq t$ such that $\LS_q[N](t_1,i,s+i)$ and $\LS_q[N](t_2,k-i,v-s-i-1)$ both are realizable, then $\LS_q[N](t,k,v)$ is realizable.
\end{proposition}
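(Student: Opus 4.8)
The plan is to combine the decomposition type from \cite[Theorem~3.19]{BKKL14} with the Basic Lemma~\ref{lem:basic} and the three structural properties of $(N,t)$-partitionable sets recalled above. First I would fix a maximal chain $\{\mathbf 0\}=U_0<U_1<\cdots<U_v=V$ and, for the given $s\in\{0,\ldots,v-k-1\}$, write the Gra{\ss}mannian as the disjoint union
\[
	\qbinom{V}{k}{q}
	= \bigcup_{i=0}^k \qbinom{U_{s+i}}{i}{q} *_{\overline{U_{s+i+1}/U_{s+i}}} \qbinom{V/U_{s+i+1}}{k-i}{q}\text{.}
\]
The restriction $0\leq s\leq v-k-1$ ensures that all occurring indices $s+i$ and $s+i+1$ stay within $\{0,\ldots,v\}$, so every term is well defined, with $\dim(U_{s+i})=s+i$ and $\dim(V/U_{s+i+1})=v-s-i-1$; these are precisely the ambient dimensions appearing in the realizability hypotheses.

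Next I would treat a single index $i\in\{0,\ldots,k\}$ together with its associated integers $t_1,t_2\geq-1$ (which may depend on $i$). By the equivalence \cite[Lemma~4.8]{BKKL14}, realizability of $\LS_q[N](t_1,i,s+i)$ means that the full Gra{\ss}mannian of an $(s+i)$-dimensional space is $(N,t_1)$-partitionable; since all $\GF(q)$-spaces of equal dimension are isomorphic and this isomorphism carries over to their Gra{\ss}mannians, the concrete set $\qbinom{U_{s+i}}{i}{q}$ is $(N,t_1)$-partitionable as well, and likewise $\qbinom{V/U_{s+i+1}}{k-i}{q}$ is $(N,t_2)$-partitionable. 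The extension of the definitions to $t=-1$ makes these statements hold uniformly, including the degenerate cases $t_1=-1$ or $t_2=-1$. Because the dimension conditions of the Basic Lemma are met---$i\in\{0,\ldots,s+i\}$ since $s\geq0$, and $k-i\in\{0,\ldots,v-s-i-1\}$ since $i\leq k$ and $s\leq v-k-1$---Lemma~\ref{lem:basic} applies and shows that the $i$-th summand $\qbinom{U_{s+i}}{i}{q} *_{\overline{U_{s+i+1}/U_{s+i}}} \qbinom{V/U_{s+i+1}}{k-i}{q}$ is $(N,t_1+t_2+1)$-partitionable. Since $t_1+t_2+1\geq t$ by assumption, the downward closure \cite[Lemma~4.3]{BKKL14} then upgrades this to $(N,t)$-partitionability of the summand.

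Finally I would glue the summands back together: as the displayed decomposition is a disjoint union and each part is $(N,t)$-partitionable, closure under disjoint unions \cite[Lemma~4.7]{BKKL14} shows that $\qbinom{V}{k}{q}$ is $(N,t)$-partitionable, and one more application of \cite[Lemma~4.8]{BKKL14} converts this into the realizability of $\LS_q[N](t,k,v)$, as claimed. The argument is in essence a matter of assembling the cited results in the correct order; the only step demanding genuine care is the verification that, for every $i$, the dimension ranges required by the Basic Lemma are respected---this is exactly where the bound $s\leq v-k-1$ enters---together with the small observation that partitionability of a concrete Gra{\ss}mannian may be transported along a vector space isomorphism from the abstract large set guaranteed by the hypothesis.
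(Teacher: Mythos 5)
Your proposal is correct and follows essentially the same route as the paper, which derives the proposition directly from the decomposition of the Gra{\ss}mannian into avoiding joins, the Basic Lemma, and the closure properties of $(N,t)$-partitionable sets under lowering $t$ and under disjoint unions. The additional details you supply (the dimension checks tied to $0\leq s\leq v-k-1$ and the transport of partitionability along a vector space isomorphism) are exactly the points the paper leaves implicit in its phrase ``by the above discussion.''
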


We remark that the proof of the Basic Lemma in~\cite{BKKL14} is constructive, implying that the statement of Proposition~\ref{prop:avoiding_construction} is constructive, too.

\section{The method of Kramer and Mesner}
Fixing a subgroup $G$ of $\operatorname{P\Gamma L}(V)$, the following idea can be used for the construction of $G$-invariant $t$-$(v,k,\lambda)_q$ designs \cite{KM76,MMY95,BKL05}:
The action of $G$ induces partitions $\qbinom{V}{t}{q} = \bigcup_{i=1}^\tau \mathcal{T}_i$ and $\qbinom{V}{k}{q} = \bigcup_{j=1}^\kappa \mathcal{K}_j$ into orbits.
Any $G$-invariant subspace design will have the form $(V,\mathcal{B})$ with $\mathcal{B} = \bigcup_{j\in J} \mathcal{K}_j$ and $J \subseteq \{1,\ldots,\kappa\}$.
In this way, an index set $J\subseteq \{1,\ldots,\kappa\}$ gives a $t$-$(v,k,\lambda)_q$ design if and only if its characteristic vector $\chi_J \in\{0,1\}^\kappa$ is a solution of the system of linear integer equations
\[
	A^G_{t,k} \chi_J = \lambda\mathbf{1}\text{,}
\]
where $\mathbf{1}$ is the all-one vector and $A^G_{t,k} = (a_{ij})$ is the ($\kappa\times\tau$)-matrix with the entries
\[
    a_{ij}
    = \#\{K\in\mathcal{K}_j \mid T_i \leq K\}\text{.}
\]
Here, $T_i\in\mathcal{T}_i$ denotes a set of orbit representatives.
The matrix $A^G_{t,k}$ is called \emph{$G$-incidence matrix}.

For the construction of large sets, we iterate this method as in \cite{BKOW14}:
After finding a solution $\chi_J$, we remove all the columns from $A^G_{t,k}$ having an index $j\in J$.
Now the solutions of the reduced system give precisely the $G$-invariant subspace designs which are disjoint to $D$.
If we succeed in repeating the process until all $G$-orbits $\mathcal{K}_j$ are covered, we have constructed a large set consisting of $t$-$(v,k,\lambda)_q$ designs.
The same approach has been used by Chee et al~\cite{CheeColbournFurinoea:90} to construct large sets of designs over sets.

\section{An $\LS_2[3](2, 4, 8)$}
A $\LS_2[3](2,4,8)$ consists of three mutually disjoint $2$-$(8,4,217)_2$ designs.
It is worth noting that up to now, even the existence of a single design with these parameters was open.

We succeeded in the construction of an $\LS_2[3](2,4,8)$ by the Kramer-Mesner method described above.
For that purpose, we take the $8$-dimensional $\GF(2)$ vector space $V = \GF(2^8)$ and prescribe the group $G = \langle\sigma^5, \phi^2\rangle$ of order $204$, where $\sigma : x \mapsto x \alpha$ with a primitive element $\alpha$ of $\GF(2^8)$ is a Singer cycle and $\phi : x\mapsto x^2$ is the Frobenius automorphism.
After the choice of a suitable basis of $V$, the generators can be written as
$$
\sigma^5 = 
\begin{pmatrix}
0&0&0&0&0&1&0&0 \\
0&0&0&0&0&0&1&0 \\
0&0&0&0&0&0&0&1 \\
1&0&1&1&1&0&0&0 \\
0&1&0&1&1&1&0&0 \\
0&0&1&0&1&1&1&0 \\
0&0&0&1&0&1&1&1 \\
1&0&1&1&0&0&1&1
\end{pmatrix}
\quad \mbox{ and } \quad
\phi^2 = 
\begin{pmatrix}
1&0&0&0&0&0&0&0 \\
0&0&0&0&1&0&0&0 \\
1&0&1&1&1&0&0&0 \\
1&0&1&1&0&0&1&1 \\
0&0&1&1&0&0&1&0 \\
0&0&1&0&1&1&0&1 \\
1&1&1&1&0&0&0&1 \\
0&0&0&1&1&0&0&0
\end{pmatrix}.
$$

In Tables \ref{tab:B1}, \ref{tab:B2}, and \ref{tab:B3} we list the orbit representatives for the three mutually disjoint designs $\Des_1$, $\Des_2$, and $\Des_3$ we found, using the same encoding as in \cite{BKOW14}.
For each representative, the four row vectors 
$$
\begin{bmatrix}
w_0 & w_1 & \ldots& w_7\\
x_0 & x_1 & \ldots& x_7\\
y_0 & y_1 & \ldots& y_7\\
z_0 & z_1 & \ldots& z_7
\end{bmatrix}
$$
spanning a $3$-subspace of $V$, are encoded as a quadruple of positive integers 
$$
[W, X, Y, Z] = \biggl[\sum_{i=0}^7 w_i2^i, \sum_{i=0}^7 x_i2^i, \sum_{i=0}^7 y_i2^i, \sum_{i=0}^7 z_i2^i\biggr].
$$

\begin{table}[!htbp]
\tiny
{\centering \caption{$2$-$(8, 4, 217; 2)$ design $\B_1$}\label{tab:B1}}
\begin{multicols}{6}
$[1,34,40,192]$
$[1,34,84,128]$
$[1,34,104,16]$
$[1,66,244,40]$
$[1,98,172,16]$
$[1,106,60,128]$
$[1,106,76,16]$
$[1,106,236,240]$
$[1,114,116,72]$
$[1,130,88,32]$
$[1,134,72,176]$
$[1,134,104,80]$
$[1,162,4,248]$
$[1,170,44,112]$
$[1,170,76,208]$
$[1,194,20,88]$
$[1,210,68,88]$
$[1,210,148,200]$
$[1,214,136,160]$
$[1,226,104,144]$
$[1,226,148,56]$
$[1,228,168,240]$
$[1,242,84,136]$
$[2,4,8,112]$
$[2,4,88,128]$
$[2,4,168,112]$
$[2,8,16,224]$
$[2,36,104,80]$
$[3,44,176,192]$
$[5,106,16,128]$
$[9,18,20,160]$
$[9,66,44,176]$
$[9,74,212,96]$
$[9,82,204,32]$
$[9,170,140,64]$
$[10,68,80,96]$
$[13,10,144,64]$
$[13,70,208,160]$
$[13,202,16,160]$
$[17,18,20,136]$
$[17,18,164,40]$
$[17,24,64,128]$
$[17,34,4,104]$
$[17,50,20,152]$
$[17,66,4,200]$
$[17,82,200,160]$
$[17,130,148,24]$
$[17,130,228,56]$
$[17,146,20,136]$
$[17,146,132,152]$
$[17,218,20,96]$
$[17,242,228,104]$
$[18,116,104,128]$
$[19,36,104,128]$
$[21,146,136,160]$
$[21,150,24,96]$
$[25,68,32,128]$
$[33,18,100,184]$
$[33,34,132,112]$
$[33,34,168,192]$
$[33,34,196,184]$
$[33,66,28,128]$
$[33,98,36,48]$
$[33,98,36,176]$
$[33,130,36,40]$
$[33,136,16,64]$
$[33,148,168,192]$
$[33,170,108,48]$
$[33,178,52,232]$
$[33,210,100,200]$
$[34,164,72,208]$
$[35,132,144,192]$
$[37,38,40,80]$
$[37,86,72,128]$
$[41,10,36,144]$
$[41,34,28,64]$
$[41,74,108,144]$
$[41,98,28,128]$
$[41,234,12,208]$
$[49,2,132,200]$
$[49,6,40,64]$
$[49,18,4,104]$
$[49,34,4,184]$
$[49,34,100,40]$
$[49,82,4,248]$
$[49,114,36,152]$
$[49,114,116,216]$
$[49,130,36,72]$
$[49,130,116,200]$
$[53,146,24,192]$
$[57,98,108,128]$
$[57,186,28,64]$
$[65,4,16,128]$
$[65,10,108,112]$
$[65,34,104,112]$
$[65,82,132,168]$
$[65,82,212,216]$
$[65,82,228,104]$
$[65,130,8,32]$
$[65,130,24,160]$
$[65,130,180,56]$
$[65,130,212,72]$
$[65,146,212,248]$
$[65,162,100,16]$
$[65,170,108,176]$
$[65,194,36,216]$
$[65,194,52,8]$
$[65,194,196,168]$
$[65,202,92,160]$
$[66,68,8,128]$
$[66,68,72,224]$
$[66,140,16,160]$
$[67,4,8,208]$
$[67,4,56,128]$
$[67,36,136,208]$
$[67,68,136,224]$
$[67,136,16,32]$
$[67,196,40,48]$
$[69,74,16,224]$
$[69,142,80,96]$
$[69,210,136,96]$
$[73,66,204,144]$
$[73,74,84,32]$
$[73,76,208,224]$
$[73,162,44,176]$
$[73,202,196,96]$
$[73,226,76,176]$
$[81,34,108,128]$
$[81,90,28,96]$
$[81,98,244,136]$
$[81,130,164,88]$
$[81,146,200,224]$
$[81,210,180,120]$
$[82,116,8,128]$
$[85,194,216,224]$
$[85,198,8,224]$
$[89,42,124,128]$
$[89,74,12,32]$
$[97,2,100,80]$
$[97,34,4,248]$
$[97,34,12,208]$
$[97,34,20,248]$
$[97,36,232,176]$
$[97,42,196,144]$
$[97,50,68,168]$
$[97,50,228,168]$
$[97,66,44,80]$
$[97,66,52,24]$
$[97,100,200,240]$
$[97,106,204,240]$
$[97,130,132,136]$
$[97,130,168,80]$
$[97,194,84,104]$
$[97,194,148,136]$
$[97,194,200,144]$
$[97,226,132,144]$
$[97,228,136,16]$
$[98,164,72,208]$
$[98,228,8,112]$
$[101,134,232,16]$
$[105,12,48,128]$
$[105,74,196,176]$
$[105,162,196,240]$
$[113,18,196,232]$
$[113,34,116,232]$
$[113,66,212,120]$
$[113,82,180,72]$
$[113,114,116,232]$
$[113,130,4,136]$
$[113,130,180,248]$
$[113,194,84,248]$
$[113,226,212,232]$
$[113,242,244,200]$
$[115,36,72,128]$
$[129,10,4,16]$
$[129,34,68,40]$
$[129,66,180,216]$
$[129,82,116,88]$
$[129,98,36,168]$
$[129,98,40,16]$
$[129,98,44,208]$
$[129,100,168,80]$
$[129,114,52,152]$
$[129,114,52,200]$
$[129,114,100,8]$
$[129,114,116,232]$
$[129,130,8,64]$
$[129,130,52,184]$
$[129,162,132,88]$
$[129,194,84,184]$
$[129,194,204,208]$
$[129,210,164,56]$
$[129,210,216,32]$
$[129,226,40,144]$
$[129,226,228,40]$
$[129,242,244,168]$
$[130,4,40,192]$
$[130,12,16,32]$
$[130,20,8,96]$
$[130,164,200,80]$
$[131,4,136,240]$
$[131,132,72,32]$
$[133,34,72,240]$
$[133,166,40,176]$
$[133,166,200,208]$
$[133,202,16,96]$
$[137,2,20,224]$
$[137,76,16,160]$
$[137,82,68,160]$
$[137,90,204,32]$
$[137,106,44,176]$
$[137,130,196,96]$
$[137,138,108,208]$
$[137,154,76,160]$
$[137,158,32,192]$
$[137,170,100,80]$
$[137,172,176,192]$
$[137,194,236,176]$
$[138,132,176,64]$
$[138,156,160,192]$
$[141,14,48,64]$
$[145,2,20,184]$
$[145,2,92,32]$
$[145,34,60,64]$
$[145,66,132,152]$
$[145,82,164,136]$
$[145,98,36,88]$
$[145,114,132,72]$
$[145,114,164,152]$
$[145,130,52,232]$
$[145,138,20,64]$
$[145,146,92,32]$
$[145,154,196,160]$
$[145,178,68,248]$
$[145,226,164,232]$
$[147,68,72,32]$
$[147,132,88,224]$
$[149,86,152,224]$
$[149,214,200,32]$
$[153,82,68,96]$
$[153,130,4,224]$
$[153,146,132,160]$
$[161,26,52,64]$
$[161,34,84,168]$
$[161,34,108,240]$
$[161,34,196,112]$
$[161,42,68,240]$
$[161,66,68,80]$
$[161,74,36,16]$
$[161,74,76,208]$
$[161,82,4,120]$
$[161,98,228,16]$
$[161,114,228,168]$
$[161,138,228,16]$
$[161,146,164,136]$
$[161,162,20,200]$
$[161,170,176,192]$
$[161,178,44,192]$
$[161,226,196,40]$
$[161,234,172,112]$
$[161,242,52,104]$
$[162,40,48,64]$
$[163,132,200,16]$
$[163,228,168,176]$
$[163,228,232,48]$
$[165,146,152,192]$
$[169,10,12,112]$
$[169,34,100,144]$
$[169,98,164,240]$
$[169,130,236,176]$
$[169,170,44,176]$
$[173,138,176,64]$
$[177,2,84,248]$
$[177,34,244,248]$
$[177,42,164,64]$
$[177,66,52,120]$
$[177,66,84,136]$
$[177,114,116,72]$
$[177,150,56,192]$
$[177,162,20,8]$
$[177,194,148,184]$
$[177,210,100,248]$
$[177,210,116,72]$
$[177,226,52,40]$
$[177,226,84,120]$
$[185,178,44,64]$
$[193,2,8,160]$
$[193,10,228,48]$
$[193,34,204,208]$
$[193,68,232,112]$
$[193,98,4,40]$
$[193,178,4,8]$
$[193,178,20,168]$
$[193,194,20,232]$
$[193,196,136,240]$
$[193,226,72,48]$
$[193,226,84,40]$
$[193,226,108,176]$
$[194,36,232,112]$
$[194,68,136,112]$
$[197,102,168,144]$
$[197,130,80,96]$
$[197,150,8,32]$
$[197,194,200,208]$
$[197,206,144,160]$
$[197,214,152,224]$
$[198,8,16,160]$
$[201,82,28,32]$
$[201,162,100,176]$
$[209,26,212,96]$
$[209,50,164,216]$
$[209,66,84,216]$
$[209,114,116,216]$
$[209,130,84,8]$
$[209,146,52,248]$
$[209,148,8,96]$
$[209,154,68,224]$
$[209,178,36,152]$
$[209,178,68,136]$
$[209,212,8,96]$
$[209,242,164,24]$
$[217,26,68,224]$
$[217,194,68,96]$
$[217,210,204,160]$
$[225,10,12,48]$
$[225,38,136,112]$
$[225,98,164,48]$
$[225,170,132,144]$
$[225,170,204,112]$
$[225,210,100,40]$
$[225,230,136,144]$
$[226,36,232,144]$
$[227,36,136,80]$
$[227,132,168,144]$
$[229,6,72,144]$
$[233,130,4,16]$
$[233,226,12,144]$
$[233,226,68,80]$
$[241,2,180,40]$
$[241,50,84,136]$
$[241,50,244,216]$
$[241,66,132,136]$
$[241,82,148,136]$
$[241,114,164,104]$
$[241,162,4,248]$
$[241,242,180,56]$

\end{multicols}
\end{table}

\begin{table}[!htbp]
\tiny
{\centering \caption{$2$-$(8, 4, 217; 2)$ design $\B_2$}\label{tab:B2}}
\begin{multicols}{6}
$[1,2,80,32]$
$[1,2,140,160]$
$[1,4,232,240]$
$[1,6,104,48]$
$[1,6,136,144]$
$[1,10,204,112]$
$[1,42,112,128]$
$[1,42,164,240]$
$[1,66,100,176]$
$[1,66,244,8]$
$[1,82,148,200]$
$[1,90,12,128]$
$[1,98,40,128]$
$[1,132,40,48]$
$[1,134,104,240]$
$[1,146,20,224]$
$[1,146,196,168]$
$[1,162,100,48]$
$[1,170,172,240]$
$[1,182,56,64]$
$[1,194,40,176]$
$[1,194,76,240]$
$[1,196,136,176]$
$[1,202,196,224]$
$[1,210,132,24]$
$[1,210,196,24]$
$[1,226,164,232]$
$[1,230,232,80]$
$[2,4,184,64]$
$[2,84,88,160]$
$[3,68,8,224]$
$[3,68,232,144]$
$[3,132,168,176]$
$[3,136,32,64]$
$[3,204,144,160]$
$[5,22,72,160]$
$[5,48,64,128]$
$[5,72,208,224]$
$[9,10,76,32]$
$[9,18,4,32]$
$[9,42,4,16]$
$[9,66,220,96]$
$[9,90,28,32]$
$[9,90,36,128]$
$[9,130,36,16]$
$[9,130,148,64]$
$[9,206,80,32]$
$[9,218,12,96]$
$[10,156,32,64]$
$[11,68,80,160]$
$[13,42,144,64]$
$[13,142,16,64]$
$[17,2,84,24]$
$[17,2,132,72]$
$[17,34,4,168]$
$[17,34,84,56]$
$[17,66,132,160]$
$[17,98,52,200]$
$[17,130,156,224]$
$[17,134,184,192]$
$[17,226,164,104]$
$[17,242,68,248]$
$[18,4,24,96]$
$[18,148,72,96]$
$[19,84,136,224]$
$[21,134,56,64]$
$[23,72,32,128]$
$[25,58,124,128]$
$[25,66,148,32]$
$[25,90,20,128]$
$[25,90,92,160]$
$[25,154,148,96]$
$[25,194,20,224]$
$[25,210,196,224]$
$[29,74,32,128]$
$[33,6,40,128]$
$[33,6,104,176]$
$[33,18,20,72]$
$[33,90,84,128]$
$[33,98,68,208]$
$[33,130,100,72]$
$[33,130,148,88]$
$[33,130,212,88]$
$[33,146,116,136]$
$[33,162,56,192]$
$[33,162,180,24]$
$[33,178,244,200]$
$[33,194,136,80]$
$[33,210,84,72]$
$[33,226,4,56]$
$[35,132,72,112]$
$[35,132,136,112]$
$[35,164,8,208]$
$[37,66,88,128]$
$[41,18,132,192]$
$[41,26,60,128]$
$[41,42,204,144]$
$[41,50,164,192]$
$[41,106,132,112]$
$[41,138,228,144]$
$[42,36,176,64]$
$[49,18,148,248]$
$[49,22,184,64]$
$[49,34,68,88]$
$[49,50,100,232]$
$[49,98,164,136]$
$[49,114,20,216]$
$[49,132,152,64]$
$[49,146,52,8]$
$[49,146,148,8]$
$[49,162,84,24]$
$[49,162,180,88]$
$[51,132,152,64]$
$[65,2,4,88]$
$[65,4,8,144]$
$[65,10,100,112]$
$[65,10,164,112]$
$[65,18,244,248]$
$[65,66,204,16]$
$[65,74,204,208]$
$[65,82,220,32]$
$[65,102,136,80]$
$[65,106,204,16]$
$[65,114,52,216]$
$[65,130,136,240]$
$[65,146,196,224]$
$[65,162,116,168]$
$[65,162,164,176]$
$[65,170,196,176]$
$[65,196,152,160]$
$[65,198,104,80]$
$[66,20,88,32]$
$[66,212,88,96]$
$[67,4,208,96]$
$[67,68,136,96]$
$[67,196,104,80]$
$[67,228,72,208]$
$[69,82,24,32]$
$[69,130,24,32]$
$[73,34,228,48]$
$[73,66,68,16]$
$[73,138,36,48]$
$[73,170,4,208]$
$[73,194,76,160]$
$[73,218,68,224]$
$[73,226,68,144]$
$[81,18,20,56]$
$[81,54,104,128]$
$[81,74,84,128]$
$[81,82,132,216]$
$[81,98,164,200]$
$[81,130,132,40]$
$[81,178,36,72]$
$[81,242,180,216]$
$[83,12,32,128]$
$[85,6,96,128]$
$[85,66,136,32]$
$[85,70,216,32]$
$[85,134,216,224]$
$[89,18,28,160]$
$[89,202,156,32]$
$[97,10,60,128]$
$[97,34,36,184]$
$[97,34,196,72]$
$[97,50,100,232]$
$[97,66,100,232]$
$[97,70,104,16]$
$[97,74,172,48]$
$[97,130,196,184]$
$[97,146,132,72]$
$[97,162,20,24]$
$[97,162,140,208]$
$[97,166,232,112]$
$[97,226,148,24]$
$[97,226,180,88]$
$[97,234,140,112]$
$[99,84,40,128]$
$[99,100,56,128]$
$[101,34,232,16]$
$[101,38,8,144]$
$[101,134,200,240]$
$[105,10,28,128]$
$[105,106,100,48]$
$[105,130,140,240]$
$[105,226,172,208]$
$[105,234,68,112]$
$[113,50,84,168]$
$[113,50,108,128]$
$[113,50,196,8]$
$[113,54,104,128]$
$[113,162,228,248]$
$[113,226,100,104]$
$[113,242,100,184]$
$[115,36,8,128]$
$[129,2,52,192]$
$[129,34,72,240]$
$[129,38,200,176]$
$[129,50,100,88]$
$[129,50,148,184]$
$[129,66,4,8]$
$[129,66,12,80]$
$[129,74,156,96]$
$[129,74,204,224]$
$[129,100,136,208]$
$[129,130,72,96]$
$[129,130,212,120]$
$[129,134,136,240]$
$[129,146,20,40]$
$[129,162,40,80]$
$[129,210,88,96]$
$[129,226,44,144]$
$[130,36,200,80]$
$[130,76,16,96]$
$[133,70,200,240]$
$[133,102,72,48]$
$[133,166,40,48]$
$[133,194,8,48]$
$[134,8,80,96]$
$[137,10,76,96]$
$[137,34,156,192]$
$[137,106,4,240]$
$[137,130,156,224]$
$[137,132,144,224]$
$[137,138,44,64]$
$[137,154,204,32]$
$[137,218,148,224]$
$[137,226,36,48]$
$[141,202,144,224]$
$[145,2,212,232]$
$[145,18,4,168]$
$[145,82,244,88]$
$[145,86,88,96]$
$[145,98,68,72]$
$[145,98,116,232]$
$[145,98,212,184]$
$[145,114,212,136]$
$[145,130,196,24]$
$[145,170,12,64]$
$[145,194,164,120]$
$[145,202,84,32]$
$[145,226,52,40]$
$[145,242,196,216]$
$[149,6,160,192]$
$[149,18,32,192]$
$[149,194,88,224]$
$[149,214,200,160]$
$[153,82,212,224]$
$[153,194,140,160]$
$[161,2,100,176]$
$[161,2,164,88]$
$[161,18,100,248]$
$[161,42,16,192]$
$[161,50,132,56]$
$[161,66,40,176]$
$[161,82,36,8]$
$[161,130,136,176]$
$[161,130,228,72]$
$[161,138,76,112]$
$[161,138,164,144]$
$[161,164,232,176]$
$[161,194,36,56]$
$[161,210,148,72]$
$[162,132,8,48]$
$[162,164,136,112]$
$[162,180,184,192]$
$[163,36,48,64]$
$[163,196,104,80]$
$[165,50,40,192]$
$[169,130,12,16]$
$[169,138,156,64]$
$[169,186,148,64]$
$[169,194,164,112]$
$[169,202,108,240]$
$[177,10,148,192]$
$[177,18,116,8]$
$[177,50,116,72]$
$[177,82,132,248]$
$[177,162,52,8]$
$[177,162,148,88]$
$[177,178,228,200]$
$[177,194,4,88]$
$[177,242,148,248]$
$[185,2,172,192]$
$[185,146,164,64]$
$[193,2,68,56]$
$[193,2,104,48]$
$[193,6,208,96]$
$[193,10,220,96]$
$[193,74,212,224]$
$[193,98,132,48]$
$[193,130,104,176]$
$[193,134,72,160]$
$[193,146,212,160]$
$[193,178,100,136]$
$[193,178,116,200]$
$[193,194,148,104]$
$[193,202,100,240]$
$[193,226,12,80]$
$[193,234,100,240]$
$[194,68,16,32]$
$[194,164,104,240]$
$[194,164,136,48]$
$[195,148,216,224]$
$[197,66,80,96]$
$[197,70,136,176]$
$[197,230,232,48]$
$[201,14,208,160]$
$[201,138,132,160]$
$[201,202,108,208]$
$[203,132,144,96]$
$[209,18,196,184]$
$[209,18,228,72]$
$[209,34,132,200]$
$[209,66,148,120]$
$[209,98,36,216]$
$[209,146,20,24]$
$[209,150,216,32]$
$[209,194,4,24]$
$[209,210,68,136]$
$[209,226,100,216]$
$[211,68,136,32]$
$[213,22,136,32]$
$[213,70,136,160]$
$[217,218,20,160]$
$[225,2,204,144]$
$[225,18,100,136]$
$[225,18,180,232]$
$[225,34,36,152]$
$[225,36,72,80]$
$[225,66,100,56]$
$[225,66,104,144]$
$[225,98,196,216]$
$[225,130,8,48]$
$[225,130,36,80]$
$[225,130,116,200]$
$[225,166,72,48]$
$[225,194,196,120]$
$[225,198,232,48]$
$[225,202,132,112]$
$[225,226,72,176]$
$[225,242,212,216]$
$[226,228,104,240]$
$[227,100,40,176]$
$[229,102,40,112]$
$[229,102,200,80]$
$[229,130,168,240]$
$[229,130,200,208]$
$[233,66,172,80]$
$[233,130,100,144]$
$[233,162,12,176]$
$[241,18,132,88]$
$[241,66,4,56]$
$[241,82,68,8]$
$[241,146,148,232]$
$[241,194,84,24]$
$[241,194,180,248]$
$[241,226,20,216]$

\end{multicols}
\end{table}

\begin{table}[!htbp]
\tiny
{\centering \caption{$2$-$(8, 4, 217; 2)$ design $\B_3$}\label{tab:B3}}
\begin{multicols}{6}
$[1,2,4,96]$
$[1,2,76,176]$
$[1,2,96,128]$
$[1,10,32,64]$
$[1,34,20,136]$
$[1,34,100,240]$
$[1,66,164,184]$
$[1,98,212,136]$
$[1,106,108,144]$
$[1,114,196,248]$
$[1,130,108,48]$
$[1,136,160,192]$
$[1,138,68,144]$
$[1,138,140,240]$
$[1,146,180,72]$
$[1,162,12,192]$
$[1,162,132,56]$
$[1,194,20,24]$
$[1,228,136,16]$
$[1,234,236,16]$
$[2,8,32,64]$
$[2,68,40,128]$
$[2,100,136,240]$
$[2,148,72,96]$
$[2,228,72,144]$
$[2,228,136,112]$
$[3,116,88,128]$
$[5,74,48,128]$
$[5,162,56,64]$
$[5,162,104,176]$
$[5,166,200,240]$
$[9,2,196,208]$
$[9,6,96,128]$
$[9,10,12,240]$
$[9,10,44,48]$
$[9,10,68,96]$
$[9,18,156,96]$
$[9,44,16,128]$
$[9,76,208,96]$
$[9,106,4,128]$
$[9,202,76,176]$
$[9,202,80,96]$
$[9,202,148,224]$
$[11,4,16,64]$
$[11,12,48,64]$
$[14,144,160,192]$
$[17,18,92,160]$
$[17,26,44,192]$
$[17,42,60,64]$
$[17,66,148,232]$
$[17,114,84,128]$
$[17,114,164,168]$
$[17,132,152,160]$
$[17,134,72,160]$
$[17,148,200,224]$
$[17,166,136,64]$
$[17,178,100,168]$
$[17,178,188,64]$
$[17,196,152,224]$
$[17,218,12,224]$
$[18,8,96,128]$
$[21,6,136,32]$
$[25,66,148,96]$
$[25,74,20,32]$
$[33,2,72,80]$
$[33,2,76,112]$
$[33,2,176,64]$
$[33,2,212,24]$
$[33,10,76,128]$
$[33,10,76,176]$
$[33,10,100,128]$
$[33,18,4,24]$
$[33,100,104,144]$
$[33,132,8,112]$
$[33,162,4,216]$
$[33,194,12,144]$
$[33,210,52,232]$
$[33,210,84,200]$
$[33,226,236,112]$
$[33,230,72,112]$
$[33,230,200,112]$
$[34,8,144,192]$
$[34,36,168,48]$
$[34,44,16,64]$
$[34,132,200,144]$
$[35,36,184,192]$
$[35,100,168,48]$
$[37,34,40,144]$
$[37,70,136,80]$
$[37,98,40,128]$
$[37,198,72,176]$
$[41,42,132,64]$
$[41,50,188,192]$
$[41,66,4,144]$
$[41,98,84,128]$
$[41,106,108,16]$
$[41,106,172,240]$
$[41,194,196,208]$
$[49,18,180,8]$
$[49,26,4,128]$
$[49,82,116,232]$
$[49,114,180,88]$
$[49,178,212,88]$
$[49,226,164,168]$
$[49,226,164,232]$
$[49,242,68,120]$
$[50,52,72,128]$
$[51,4,24,128]$
$[57,58,28,64]$
$[61,46,64,128]$
$[65,6,104,80]$
$[65,10,236,144]$
$[65,18,156,32]$
$[65,66,8,48]$
$[65,66,180,8]$
$[65,68,136,96]$
$[65,74,172,80]$
$[65,82,100,8]$
$[65,98,132,216]$
$[65,98,148,136]$
$[65,98,164,232]$
$[65,98,228,120]$
$[65,130,228,24]$
$[65,130,244,72]$
$[65,134,144,224]$
$[65,146,68,40]$
$[65,146,196,56]$
$[65,154,212,32]$
$[65,166,104,80]$
$[65,178,116,216]$
$[65,202,12,176]$
$[65,226,148,200]$
$[65,234,68,112]$
$[67,36,72,128]$
$[67,68,72,208]$
$[67,76,16,32]$
$[67,132,8,208]$
$[67,228,136,16]$
$[69,2,88,128]$
$[69,14,208,224]$
$[69,38,232,48]$
$[69,82,88,32]$
$[69,102,232,48]$
$[69,146,152,96]$
$[69,162,136,80]$
$[69,214,72,96]$
$[73,2,4,80]$
$[73,2,16,224]$
$[73,10,12,160]$
$[73,42,172,16]$
$[73,44,112,128]$
$[73,74,16,128]$
$[73,74,148,224]$
$[73,106,44,208]$
$[73,122,100,128]$
$[73,202,220,160]$
$[73,234,236,48]$
$[81,38,56,128]$
$[81,82,148,136]$
$[81,90,140,96]$
$[81,98,52,200]$
$[81,210,116,248]$
$[81,210,244,8]$
$[81,226,244,104]$
$[82,72,32,128]$
$[82,116,40,128]$
$[85,150,88,32]$
$[89,10,204,32]$
$[89,82,84,32]$
$[89,106,52,128]$
$[89,202,4,32]$
$[89,202,220,224]$
$[91,12,96,128]$
$[97,34,164,168]$
$[97,66,12,144]$
$[97,66,80,128]$
$[97,66,100,184]$
$[97,66,104,80]$
$[97,74,228,112]$
$[97,106,140,80]$
$[97,162,100,80]$
$[97,194,236,16]$
$[97,226,100,240]$
$[97,226,136,80]$
$[97,228,40,240]$
$[97,228,200,48]$
$[98,4,104,112]$
$[98,4,136,144]$
$[98,4,168,144]$
$[98,164,104,176]$
$[98,196,8,80]$
$[99,132,232,240]$
$[101,106,16,128]$
$[105,42,204,176]$
$[105,170,4,176]$
$[105,194,76,176]$
$[113,38,40,128]$
$[113,50,196,136]$
$[113,82,84,72]$
$[113,178,132,120]$
$[113,178,228,120]$
$[113,210,116,152]$
$[129,2,4,248]$
$[129,6,216,32]$
$[129,10,12,192]$
$[129,18,132,104]$
$[129,38,16,192]$
$[129,38,176,192]$
$[129,66,68,200]$
$[129,82,20,40]$
$[129,82,132,160]$
$[129,98,68,168]$
$[129,130,92,160]$
$[129,130,116,40]$
$[129,146,20,104]$
$[129,146,52,152]$
$[129,146,84,72]$
$[129,162,196,176]$
$[129,164,48,64]$
$[129,178,116,152]$
$[129,194,232,80]$
$[129,212,88,96]$
$[129,218,4,32]$
$[129,218,92,32]$
$[129,242,4,72]$
$[130,100,136,48]$
$[130,132,200,176]$
$[131,36,200,80]$
$[133,6,176,64]$
$[133,50,56,192]$
$[133,134,48,64]$
$[133,134,232,144]$
$[133,194,200,144]$
$[133,206,208,96]$
$[137,2,204,96]$
$[137,2,236,176]$
$[137,18,156,96]$
$[137,34,36,16]$
$[137,42,4,208]$
$[137,162,236,144]$
$[137,194,68,48]$
$[137,194,140,224]$
$[137,202,4,224]$
$[137,226,164,240]$
$[141,202,208,96]$
$[145,26,84,96]$
$[145,50,196,184]$
$[145,52,168,192]$
$[145,114,36,72]$
$[145,146,76,96]$
$[145,146,180,64]$
$[145,162,116,8]$
$[145,178,36,40]$
$[145,178,52,192]$
$[145,226,244,168]$
$[145,242,212,168]$
$[153,42,60,192]$
$[161,2,136,240]$
$[161,34,40,80]$
$[161,34,136,176]$
$[161,36,200,16]$
$[161,54,8,64]$
$[161,66,100,216]$
$[161,74,108,240]$
$[161,98,148,248]$
$[161,100,72,16]$
$[161,178,148,8]$
$[161,194,180,232]$
$[162,36,72,80]$
$[162,44,48,192]$
$[163,100,104,176]$
$[163,132,104,112]$
$[169,34,140,208]$
$[169,34,164,208]$
$[169,106,228,48]$
$[169,154,180,64]$
$[169,186,52,64]$
$[169,194,140,208]$
$[177,6,56,64]$
$[177,18,132,136]$
$[177,34,52,192]$
$[177,42,132,192]$
$[177,66,132,152]$
$[177,66,228,136]$
$[177,114,132,8]$
$[177,114,180,24]$
$[177,134,24,192]$
$[177,146,196,88]$
$[177,162,52,136]$
$[177,162,140,192]$
$[177,194,132,104]$
$[177,226,212,216]$
$[179,20,8,192]$
$[181,22,184,192]$
$[181,178,184,192]$
$[185,138,172,192]$
$[193,38,72,16]$
$[193,50,20,88]$
$[193,66,140,32]$
$[193,68,8,48]$
$[193,82,200,224]$
$[193,106,76,144]$
$[193,114,196,120]$
$[193,114,228,168]$
$[193,140,208,96]$
$[193,162,168,16]$
$[193,178,212,56]$
$[193,194,232,16]$
$[193,196,136,224]$
$[193,196,152,96]$
$[193,226,236,16]$
$[193,226,244,136]$
$[194,68,104,176]$
$[195,4,80,32]$
$[197,18,24,32]$
$[197,66,136,240]$
$[197,150,24,224]$
$[197,214,152,32]$
$[197,230,40,176]$
$[201,2,76,224]$
$[201,10,4,208]$
$[201,12,144,32]$
$[201,132,144,96]$
$[201,138,208,224]$
$[201,170,140,144]$
$[209,130,152,32]$
$[209,130,164,200]$
$[209,148,72,96]$
$[209,162,244,184]$
$[209,194,8,96]$
$[209,210,212,216]$
$[213,134,24,96]$
$[225,2,136,112]$
$[225,4,232,176]$
$[225,34,4,56]$
$[225,34,12,80]$
$[225,34,12,208]$
$[225,34,148,40]$
$[225,34,164,240]$
$[225,36,232,16]$
$[225,98,180,200]$
$[225,114,36,72]$
$[225,130,164,112]$
$[225,138,172,16]$
$[225,194,4,168]$
$[225,194,84,40]$
$[225,210,100,168]$
$[225,226,100,200]$
$[226,132,72,48]$
$[229,134,104,48]$
$[233,106,68,48]$
$[233,106,140,176]$
$[233,106,164,240]$
$[233,170,44,176]$
$[241,82,52,136]$
$[241,98,20,8]$
$[241,114,116,168]$
$[241,210,20,104]$

\end{multicols}
\end{table}

\section{An infinite series of large sets}
Now we are ready to proof our main result.
\begin{proof}[Proof of Theorem~\ref{thm:series}]
	We proceed by induction on $v$.
	For $v = 8$, an $\LS_2[3](2,4,8)$ was constructed above.
	In~\cite{BKOW14}, an $\LS_2[3](2,3,8)$ was constructed, and its dual large set is an $\LS_2[3](2,5,8)$.
	The repeated application of~\cite[Cor.~20]{KL15} yields the existence of $\LS_2[3](2,4,9)$, $\LS_2[3](2,5,9)$ and $\LS_2[3](2,5,10)$.

	It remains to consider $v \geq 14$.
	By duality, we may assume $k \leq \frac{v}{2}$.
	By $v - k - 1 \geq \frac{v}{2} - 1 \geq 6$ and the realizability statements of Table~\ref{tbl:decomp}, the existence of an $\LS_2[3](2,k,v)$ follows from the application of Proposition~\ref{prop:avoiding_construction} with $s=5$.

	It remains to show the correctness of Table~\ref{tbl:decomp}. 
	For all $i\equiv 3,4,5\bmod 6$, there exists an $\LS_2[3](2,i,5+i)$ by the induction hypothesis.
	By taking a derived large set, we see that for $i\equiv 2\bmod 6$, there exists an $\LS_2[3](1,i,5+i)$ and for $i\equiv 1\bmod 6$, there exists an $\LS_2[3](0,i,5+i)$.
	Similarly, for $i\equiv 0\bmod 6$, we have an $\LS_2[3](2,k-i,v-6-i)$ by the induction hypothesis.
	By derived large sets, for $i\equiv 5\bmod 6$ there exists an $\LS_2[3](1,k-i,v-6-i)$, and for $i\equiv 4\bmod 6$ there exists an $\LS_2[3](0,k-i,v-6-i)$.
\end{proof}
	
	\begin{table}
	\caption{Realizable large sets used in the proof of Theorem~\ref{thm:series}}
	\label{tbl:decomp}
	\noindent\centering
		$\begin{array}{cccc}
			i                & \LS_2[3](t_1,i,5-i) & \LS_2[3](t_2,k-i,v-6-i) & t_1 + t_2 + 1 \\
			\hline
			i\equiv 0\bmod 6 & t_1 = -1 & t_2 = 2 & 2 \\
			i\equiv 1\bmod 6 & t_1 = 0 & t_2 = 1 & 2 \\
			i\equiv 2\bmod 6 & t_1 = 1 & t_2 = 0 & 2 \\
			i\equiv 3,4,5\bmod 6 & t_1 = 2 & t_2 = -1 & 2
		\end{array}$
	\end{table}

\begin{remark}
	We would like to mention that the proof of Theorem~\ref{thm:series} works for any value of $q$ and $N$, provided that there exist $\LS_q[N](2,3,8)$ and $\LS_q[N](2,4,8)$.
	For example, the parameters $\LS_3[7](2,3,8)$ and $\LS_3[7](2,4,8)$ are admissible, but the realizability is open.
	If both large sets actually do exist, then we get an infinite series like in Theorem~\ref{thm:series}.
\end{remark}

Our knowledge for the existence of $\LS_2[3](2,k,v)$ is shown in Table~\ref{tbl:ueberblick}. 
A minus sign indicates that the parameters are not admissible, and a question mark that the parameters are admissible, but the realizability is open.
All known realizability results are covered by Theorem~\ref{thm:series}.
In these cases we display the parameter $k$ in the table.
Because of duality, only the parameter range $3\leq k \leq v/2$ is shown.
Apart from the already known $\LS_2[3](2,3,8)$ (and its dual $\LS_2[3](2,5,8)$), all these realizability results are new.
The smallest open case is given by the admissible parameter set $\LS_2[3](2,6,20)$, which is out of reach of our current construction methods.

\begin{table}
\caption{Admissibility and realizability of $\LS_2[3](2,k,v)$}
\label{tbl:ueberblick}
\centering\begin{small} 
\begingroup
\setlength\tabcolsep{0pt}
 \begin{tabular}{*{35}{>{\centering\arraybackslash}p{3.5mm}}@{\hskip 2mm}|@{\hskip 1mm}c@{\hskip 1mm}|} 
 & & & & & & & & & & & & & & & & & & & & & & & & & & & & & & & & & & & $\mathbf{v}$ \\
 & & & & & & & & & & & & & & & & & & & & & & & & & & & & & & & & & &-& 6 \\
 & & & & & & & & & & & & & & & & & & & & & & & & & & & & & & & & &-& & 7 \\
 & & & & & & & & & & & & & & & & & & & & & & & & & & & & & & & &3& &4& 8 \\
 & & & & & & & & & & & & & & & & & & & & & & & & & & & & & & &-& &4& & 9 \\
 & & & & & & & & & & & & & & & & & & & & & & & & & & & & & &-& &-& &5& 10 \\
 & & & & & & & & & & & & & & & & & & & & & & & & & & & & &-& &-& &-& & 11 \\
 & & & & & & & & & & & & & & & & & & & & & & & & & & & &-& &-& &-& &-& 12 \\
 & & & & & & & & & & & & & & & & & & & & & & & & & & &-& &-& &-& &-& & 13 \\
 & & & & & & & & & & & & & & & & & & & & & & & & & &3& &4& &5& &-& &-& 14 \\
 & & & & & & & & & & & & & & & & & & & & & & & & &-& &4& &5& &-& &-& & 15 \\
 & & & & & & & & & & & & & & & & & & & & & & & &-& &-& &5& &-& &-& &-& 16 \\
 & & & & & & & & & & & & & & & & & & & & & & &-& &-& &-& &-& &-& &-& & 17 \\
 & & & & & & & & & & & & & & & & & & & & & &-& &-& &-& &-& &-& &-& &-& 18 \\
 & & & & & & & & & & & & & & & & & & & & &-& &-& &-& &-& &-& &-& &-& & 19 \\
 & & & & & & & & & & & & & & & & & & & &3& &4& &5& &?& &?& &?& &9& &10& 20 \\
 & & & & & & & & & & & & & & & & & & &-& &4& &5& &?& &?& &?& &?& &10& & 21 \\
 & & & & & & & & & & & & & & & & & &-& &-& &5& &?& &?& &?& &?& &?& &11& 22 \\
 & & & & & & & & & & & & & & & & &-& &-& &-& &?& &?& &?& &?& &?& &?& & 23 \\
 & & & & & & & & & & & & & & & &-& &-& &-& &-& &?& &?& &?& &?& &?& &?& 24 \\
 & & & & & & & & & & & & & & &-& &-& &-& &-& &-& &?& &?& &?& &?& &?& & 25 \\
 & & & & & & & & & & & & & &3& &4& &5& &-& &-& &-& &9& &10& &11& &?& &?& 26 \\
 & & & & & & & & & & & & &-& &4& &5& &-& &-& &-& &-& &10& &11& &?& &?& & 27 \\
 & & & & & & & & & & & &-& &-& &5& &-& &-& &-& &-& &-& &11& &?& &?& &?& 28 \\
 & & & & & & & & & & &-& &-& &-& &-& &-& &-& &-& &-& &-& &?& &?& &?& & 29 \\
 & & & & & & & & & &-& &-& &-& &-& &-& &-& &-& &-& &-& &-& &?& &?& &?& 30 \\
 & & & & & & & & &-& &-& &-& &-& &-& &-& &-& &-& &-& &-& &-& &?& &?& & 31 \\
 & & & & & & & &3& &4& &5& &-& &-& &-& &9& &10& &11& &-& &-& &-& &15& &16& 32 \\
 & & & & & & &-& &4& &5& &-& &-& &-& &-& &10& &11& &-& &-& &-& &-& &16& & 33 \\
 & & & & & &-& &-& &5& &-& &-& &-& &-& &-& &11& &-& &-& &-& &-& &-& &17& 34 \\
 & & & & &-& &-& &-& &-& &-& &-& &-& &-& &-& &-& &-& &-& &-& &-& &-& & 35 \\
 & & & &-& &-& &-& &-& &-& &-& &-& &-& &-& &-& &-& &-& &-& &-& &-& &-& 36 \\
 & & &-& &-& &-& &-& &-& &-& &-& &-& &-& &-& &-& &-& &-& &-& &-& &-& & 37 \\
 & &3& &4& &5& &?& &?& &?& &9& &10& &11& &?& &?& &?& &15& &16& &17& &-& &-& 38 \\
 &-& &4& &5& &?& &?& &?& &?& &10& &11& &?& &?& &?& &?& &16& &17& &-& &-& & 39 \\
-& &-& &5& &?& &?& &?& &?& &?& &11& &?& &?& &?& &?& &?& &17& &-& &-& &-& 40 \\
\end{tabular} 
\endgroup
 \end{small} 
 \end{table}

\section*{Acknowledgement}
The authors would like to acknowledge the financial support provided by COST -- \emph{European Cooperation in Science and Technology}.
The authors are members of the Action IC1104 \emph{Random Network Coding and Designs over GF(q)}.

\bibliographystyle{plain}
\bibliography{qdesign}

\begin{thebibliography}{10}

\bibitem{Aj96}
S.~Ajoodani-Namini.
\newblock Extending large sets of $t$-designs.
\newblock {\em Journal of Combinatorial Theory, Series A}, 76(1):139--144,
  1996.

\bibitem{Bak76}
Ronald~D. Baker.
\newblock Partitioning the planes of {$AG_{2m}(2)$} into $2$-designs.
\newblock {\em Discrete Mathematics}, 15(3):205--211, 1976.

\bibitem{Bar75}
Zsolt Baranyai.
\newblock On the factorization of the complete uniform hypergraph.
\newblock In A.~Hajnal, R.~Rado, and Vera~T. S\'{o}s, editors, {\em Infinite
  and finite Sets}, number~10 in Colloquia Mathematica Societatis J\'{a}nos
  Bolyai, pages 91--107, Budapest and Amsterdam, 1975. Bolyai J\'{a}nos
  Matematikai T\'{a}rsulat and North-Holland.

\bibitem{Beu74}
Albrecht Beutelspacher.
\newblock On parallelisms in finite projective spaces.
\newblock {\em Geometriae Dedicata}, 3(1):35--40, 1974.

\bibitem{Bra05}
Michael Braun.
\newblock {Designs over Finite Fields}.
\newblock In {\em ALCOMA'05 --- Proceedings of the Conference on Algebraic
  Combinatorics and Applications, Designs and Codes, April 3-10, 2005, Thurnau,
  Germany}, pages 58--68. Bayreuther Mathematische Schriften 74, 2005.

\bibitem{BKL05}
Michael Braun, Adalbert Kerber, and Reinhard Laue.
\newblock {Systematic Construction of $q$-Analogs of Designs}.
\newblock {\em Designs, Codes and Cryptography}, 34(1):55--70, 2005.

\bibitem{BKKL14}
Michael Braun, Michael Kiermaier, Axel Kohnert, and Reinhard Laue.
\newblock Large sets of subspace designs.
\newblock arXiv:1411.7181, 2014.

\bibitem{BKOW14}
Michael Braun, Axel Kohnert, Patric R.~J. \"{O}sterg{\aa}rd, and Alfred
  Wassermann.
\newblock Large sets of t-designs over finite fields.
\newblock {\em J. Comb. Theory Ser. A}, 124:195--202, May 2014.

\bibitem{Cameron1}
Peter~J. Cameron.
\newblock Generalisation of {F}isher's inequality to fields with more than one
  element.
\newblock In {\em Combinatorics}, volume~13 of {\em London Mathematical Society
  Lecture Note Series}, pages 9--13. Cambridge University Press, Cambridge,
  1974.

\bibitem{Cameron2}
Peter~J. Cameron.
\newblock Locally symmetric designs.
\newblock {\em Geom.\ Dedicata}, 3:65--76, 1974.

\bibitem{CheeColbournFurinoea:90}
Yeow~Meng Chee, Charles~J. Colbourn, Steven~C. Furino, and Donald~L. Kreher.
\newblock Large sets of disjoint $t$-designs.
\newblock {\em Australasian J. of Combinatorics}, 2:111--119, 1990.

\bibitem{Colbourn:2006}
Charles~J. Colbourn and Jeffrey~H. Dinitz.
\newblock {\em Handbook of Combinatorial Designs, Second Edition (Discrete
  Mathematics and Its Applications)}.
\newblock Chapman \& Hall/CRC, 2006.

\bibitem{Del76}
Philippe Delsarte.
\newblock {Association Schemes and $t$-Designs in Regular Semilattices}.
\newblock {\em Journal of Combinatorial Theory, Series A}, 20(2):230--243,
  1976.

\bibitem{Den72}
Ralph H.~F. Denniston.
\newblock Some packings of projective spaces.
\newblock {\em Atti della Accademia Nazionale dei Lincei. Rendiconti. Classe di
  Scienze Fisiche, Matematiche e Naturali. Serie VIII}, 52:36--40, 1972.

\bibitem{EV12}
Tuvi Etzion and Alexander Vardy.
\newblock Automorphisms of codes in the {G}rassmann scheme.
\newblock arXiv:1210.5724, 2012.

\bibitem{Ito98}
Tatsuro Itoh.
\newblock {A New Family of $2$-Designs over $GF(q)$ Admitting $SL_m(q^l)$}.
\newblock {\em Geometriae Dedicata}, 69:261--286, 1998.

\bibitem{KT09}
G.~B. Khosrovshahi and Tayfeh-Rezaie B.
\newblock Trades and $t$-designs.
\newblock In {\em Surveys in combinatorics 2009}, number 365 in London
  Mathematical Society Lecture Note Series, pages 91--111. Cambridge Univ.
  Press, 2009.

\bibitem{KL15}
Michael Kiermaier and Reinhard Laue.
\newblock Derived and residual subspace designs.
\newblock {\em Adv. Math. Commun.}, 9(1):105--115, 2015.

\bibitem{KM76}
Earl~S. Kramer and Dale~M. Mesner.
\newblock $t$-designs on hypergraphs.
\newblock {\em Discrete Math.}, 15:263--296, 1976.

\bibitem{KMP16}
D.~S. Krotov, I.~Yu. Mogilnykh, and V.~N. Potapov.
\newblock To the theory of $q$-ary steiner and other-type trades.
\newblock {\em Discrete Mathematics}, 339(3):1150--1157, 2016.

\bibitem{K15}
Denis Krotov.
\newblock The minimum volume of subspace trades.
\newblock arXiv:1512.02592, 2015.

\bibitem{MMY95}
Masashi Miyakawa, Akihiro Munemasa, and Satoshi Yoshiara.
\newblock {On a Class of Small $2$-Designs over $GF(q)$}.
\newblock {\em Journal of Combinatorial Designs}, 3:61--77, 1995.

\bibitem{Sar02}
Jumela~F. Sarmiento.
\newblock On point-cyclic resolutions of the 2-(63,7,15) design associated with
  {PG}(5,2).
\newblock {\em Graphs and Combinatorics}, 18(3):621--632, 2002.

\bibitem{Suz90}
Hiroshi Suzuki.
\newblock {$2$-Designs over $GF(2^m)$}.
\newblock {\em Graphs and Combinatorics}, 6:293--296, 1990.

\bibitem{Suz92}
Hiroshi Suzuki.
\newblock {$2$-Designs over $GF(q)$}.
\newblock {\em Graphs and Combinatorics}, 8:381--389, 1992.

\bibitem{Tho87}
Simon Thomas.
\newblock {Designs over Finite Fields}.
\newblock {\em Geometriae Dedicata}, 24:237--242, 1987.

\bibitem{Wet91}
Ferenc Wettl.
\newblock On parallelisms of odd dimensional finite projective spaces.
\newblock {\em Periodica Polytechnica}, 19(1-2):111--116, 1991.

\end{thebibliography}

\end{document}